\newtheorem{theorem}{Theorem}[section]
\newtheorem{lemma}[theorem]{Lemma}
\newcommand{\del}{\backslash}
\newcommand{\con}{/}
\newcommand{\cT}{\mathcal{T}}
\newcommand{\cS}{\mathcal S}
\newcommand{\CR}{\mbox{CR}}
\begin{document}

\sloppy

\title{Taking minors without splitting tangles}

\author{Jim Geelen}
\address{Department of Combinatorics and Optimization,
University of Waterloo, Waterloo, Canada}

\begin{abstract} We prove that
any element in a matroid can be removed, by either deletion or contraction, in such a way that no tangle  ``splits".
\end{abstract}

%\date{\today}

\maketitle

\section{Introduction}
 
 Let $N$ be a minor of a matroid $M$ and let $\cT$ be a tangle of order $k$ in $M$. We say that $\cT$ {\em splits} in $N$ if there are two distinct tangles of order $k$ in $N$
 that both induce the tangle $\cT$ in $M$  (see Section~\ref{sec2} for the definition of a tangle and for the definition of ``induce"). We prove the following result.
 \begin{theorem}\label{main}
 Any element in a matroid can be removed by either deletion or contraction in such a way that no tangle  splits.
 \end{theorem}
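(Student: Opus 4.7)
My plan is to proceed by contradiction: fix $e \in E(M)$ and suppose some tangle splits in $M \del e$ while some (possibly different) tangle splits in $M \con e$. The first step is a local characterization of splitting via closure. Suppose $\cT$ of order $k$ splits in $M \del e$, witnessed by distinct tangles $\cT', \cT''$ of order $k$ in $M \del e$ that both induce $\cT$. They must agree on any $(k-1)$-separation $(A,B)$ of $M \del e$ for which either of $(A \cup \{e\}, B)$ or $(A, B \cup \{e\})$ has $\lambda_M < k$, since on such a separation $\cT$ itself has already picked a side. So $\cT'$ and $\cT''$ can only disagree on a $(k-1)$-separation whose two $M$-extensions both have order $\geq k$. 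A direct calculation from $\lambda_{M \del e}(A) = k - 1$ shows this is equivalent to $e \notin \mathrm{cl}_M(A)$ and $e \notin \mathrm{cl}_M(B)$ (and forces $e$ to be a non-coloop). Dualising through $(M \del e)^* = M^* \con e$ and the self-duality of tangles, the analogous condition for splitting in $M \con e$ is a $(k'-1)$-separation $(A', B')$ of $M \con e$ with $e \in \mathrm{cl}_M(A') \cap \mathrm{cl}_M(B')$.

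Next I would pick witnesses: $(A_1, B_1)$ of order $k_1 - 1$ for a deletion-splitting tangle $\cT_1$ of order $k_1$, and $(A_2, B_2)$ of order $k_2 - 1$ for a contraction-splitting tangle $\cT_2$ of order $k_2$. Partitioning $E \setminus \{e\}$ into quadrants $P = A_1 \cap A_2$, $Q = A_1 \cap B_2$, $R = B_1 \cap A_2$, $S = B_1 \cap B_2$, the closure data for $e$ reads
\[
e \in \mathrm{cl}_M(P \cup R) \cap \mathrm{cl}_M(Q \cup S), \qquad e \notin \mathrm{cl}_M(P \cup Q) \cup \mathrm{cl}_M(R \cup S),
\]
which rules out $(A_1, B_1) = (A_2, B_2)$ and, in the generic case, forces the two separations to cross. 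Submodularity of $\lambda_M$ applied to pairs chosen from $A_1, B_1, A_2 \cup \{e\}, B_2 \cup \{e\}$ and their complements then produces upper bounds on the connectivities of the four quadrants and of their pairwise unions.

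To finish, I would combine these submodular bounds with the tangle axioms of the lifted tangles $\cT_1', \cT_1''$ in $M \del e$ and $\cT_2', \cT_2''$ in $M \con e$. The matroid tangle axiom constrains how three ``small sides'' of a single tangle can cover the ground set via a rank inequality, so applied to a well-chosen triple drawn from the uncrossing --- one whose incidence with $e$ is pinned down by the asymmetric closure data above --- it should be violated for one of the lifted tangles, giving the desired contradiction.

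I expect the main obstacle to lie precisely in this final step. The two witnesses live in different minors and come from tangles of independent orders, so one must select both the right triple on which to uncross and the right lifted tangle whose axiom to invoke. My preferred route would be to lift both witnesses to $M$, treating them as $k_1$- and $k_2$-separations there, and then exploit the sharp contrast between $e$ being free across $(A_1, B_1)$ yet central to $(A_2, B_2)$ to produce an explicit triple on which a tangle axiom is forced to fail.
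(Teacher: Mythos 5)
Your first two steps are sound: the observation that two lifts of $\cT$ to $M\del e$ can only disagree on a $(k-1)$-separation $(A,B)$ of $M\del e$ whose both extensions to $M$ have connectivity at least $k$, and the translation of this into $e\notin\mathrm{cl}_M(A)\cup\mathrm{cl}_M(B)$ (dually, $e\in\mathrm{cl}_M(A')\cap\mathrm{cl}_M(B')$ for contraction), are correct and are essentially a closure-language reformulation of the connectivity computation underlying the paper. But the proof stops exactly where it has to start. The closure conditions you extract are consistent with one another --- nothing about $e\in\mathrm{cl}_M(P\cup R)\cap\mathrm{cl}_M(Q\cup S)$ together with $e\notin\mathrm{cl}_M(P\cup Q)\cup\mathrm{cl}_M(R\cup S)$ is contradictory --- so any contradiction must come from the tangle axioms, and your sketch of that step (``a well-chosen triple drawn from the uncrossing'') is a hope, not an argument. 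Worse, the data you have collected cannot supply such a triple: to violate the ``no three sets cover'' axiom for a lifted tangle $\cT_1'$ you need three sets of connectivity less than $k_1$ in $M$, lying in $\cT_1'$, covering $E\setminus\{e\}$; knowing only that $e$ avoids the closures of $A_1$ and $B_1$ gives you nothing of the sort. The two witnessing separations also live in different minors and belong to tangles of unrelated orders $k_1,k_2$, so there is no single tangle whose axiom your uncrossed quadrants could violate.

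The missing idea is to strengthen the disagreement condition from ``the trivial partition of $A_1$ fails'' to ``\emph{every} partition of $A_1$ (and of $B_1$) into two sets fails to have both parts of $\lambda_M$-connectivity at most $\lambda_{M\del e}(A_1)$.'' Your closure condition is precisely the special case of this where one part is empty. The paper calls the negation of this \emph{adherence} and splits the proof in two: (i) if some partition $(X_1,X_2)$ of, say, $A_1$ has both parts of small $\lambda_M$-connectivity, then both lifted tangles are forced (via the induced tangle $\cT$) to contain $X_1$ and $X_2$, and then $X_1,X_2,B_1$ are three small sets in one lifted tangle covering $E\setminus\{e\}$ --- this is the tangle-axiom contradiction you were looking for, and it handles one minor at a time; (ii) the claim that one of $M\del e$, $M\con e$ adheres is then proved by uncrossing the two non-adherence witnesses with the Bixby--Coullard inequality, with no tangles in sight, which is what dissolves the different-orders problem. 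You are circling the right inequality (your quadrant uncrossing is exactly where Lemma~\ref{BC-ineq} enters), but without the quantification over all two-part partitions of one side, neither half of the argument can be completed.
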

 We also prove a version of this result for pivot-minors of graphs; see Theorem~\ref{pm}.
 
\section{Connectivity systems and tangles}\label{sec2}

A {\em connectivity system} is a pair $(E,\lambda)$ where $\lambda$ is a non-negative integer-valued function defined on the subsets of a finite set $E$ such that
\begin{itemize}
\item for each partition $(X,Y)$ of $E$ we have $\lambda(X)=\lambda(Y)$, and
\item for sets $X,Y\subseteq E$ we have
$ \lambda(X\cap Y)+\lambda(X\cup Y)\le \lambda(X)+\lambda(Y).$
\end{itemize}
For a set $X\subseteq E$, we refer to $\lambda(X)$ as the {\em connectivity} of $X$.
For a connectivity system $K=(E,\lambda)$ we let
$\cS_k(K)$ denote the collection of all sets $X\subseteq E$  of connectivity less than $k$.

In this section we review tangles; we start by recalling the definition from~\cite{GGRW}. 
A {\em tangle} of order $k$ in a connectivity system $K=(E,\lambda)$ is a set $\cT\subseteq \cS_k(K)$ such that
\begin{itemize}
\item for each set $A\in \cS_k(K)$, exactly one of $A$ and $E\setminus A$ is contained in $\cT$,
\item there are no three sets in $\cT$ whose union is $E$, and
\item no set in  $\cT$  has size  $|E|-1$.
\end{itemize}

Let $K=(E,\lambda)$ and $K_0=(E_0,\lambda_0)$ be connectivity systems. We say that $K$ {\em dominates} $K_0$ if
$E_0\subseteq E$ and $\lambda_0(X)\le \lambda(X)$ for all $X\subseteq E_0$.
If $K$ dominates $K_0$ and $\cT_0$ is a tangle of order $k$ in  $K_0$, then taking $\cT$ to be the collection of those sets 
$X\in \cS_k(K)$ such that $S\cap E(N)\in \cT_0$ clearly defines a tangle of order $k$ in $K$. We say that $\cT$ is the tangle {\em induced} by $\cT_0$.
Then we say that a tangle $\cT$ of order $k$ in $K$ {\em splits} in $K_0$, if there are two distinct tangles $\cT_1$ and $\cT_2$ of order $k$ in $K_0$ that both induce the tangle $\cT$  in $K$.

We say that $K_0$ {\em adheres} to $K$ if $K$ dominates $K_0$ and, for 
each partition $(A,B)$ of $E(N)$, there is a partition $(X_1,X_2)$ of either $A$ or $B$ such that $\lambda(X_1)\le \lambda_0(A)$
and $\lambda(X_2)\le\lambda_0(A)$. We note that we are allowing one of $X_1$ and $X_2$ to be empty here.
 
 \begin{lemma}\label{unsplit}
If $K_0$ is a connectivity system that adheres to a connectivity system $K$, then no tangle in $K$ splits in $K_0$.
\end{lemma}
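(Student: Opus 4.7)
The plan is a proof by contradiction. Suppose some tangle $\cT$ of order $k$ in $K$ splits in $K_0$, witnessed by distinct tangles $\cT_1\ne \cT_2$ of order $k$ in $K_0$ that both induce $\cT$. Since $\cT_1\ne \cT_2$, there is some $A\in \cS_k(K_0)$ with $A\in \cT_1$ and $A\notin \cT_2$; the first tangle axiom applied to $\cT_2$ then gives $E_0\setminus A\in \cT_2$. This is the set I plan to feed into the adherence hypothesis.

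Next I would apply adherence to the partition $(A,E_0\setminus A)$ of $E_0$. After possibly swapping $\cT_1$ and $\cT_2$ (which swaps $A$ and $E_0\setminus A$), I may assume the adherence output is a partition $(X_1,X_2)$ of $A$ with $\lambda(X_1),\lambda(X_2)\le \lambda_0(A)<k$. Because $K$ dominates $K_0$, we also get $\lambda_0(X_i)\le \lambda(X_i)<k$, so $X_1,X_2$ lie in both $\cS_k(K_0)$ and $\cS_k(K)$. The key preparatory observation is that since $\cT_1$ and $\cT_2$ both induce $\cT$, they must agree on each $X_i$: either $X_i\in \cT_1\cap \cT_2$ or $E_0\setminus X_i\in \cT_1\cap \cT_2$, according to which of $X_i,E\setminus X_i$ lies in $\cT$.

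The heart of the argument is then a short case analysis over the four possible orientation choices for $X_1$ and $X_2$. Using the identities $E_0\setminus X_1=(E_0\setminus A)\cup X_2$ and $E_0\setminus X_2=(E_0\setminus A)\cup X_1$, which follow from $(X_1,X_2)$ being a partition of $A$, I would in each case exhibit three sets whose union equals $E_0$, living either inside $\cT_1$ (combining the chosen orientations of $X_1,X_2$ with the set $A\in \cT_1$) or inside $\cT_2$ (combining them with $E_0\setminus A\in \cT_2$). Each outcome contradicts the second tangle axiom, completing the proof.

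I do not expect any serious obstacle; submodularity of $\lambda,\lambda_0$ and the third tangle axiom (about sets of size $|E_0|-1$) are not needed, and the whole argument runs on axioms (T1) and (T2) together with the definition of ``induces'' and the single inequality from adherence. The only thing to be careful about is the bookkeeping in the case analysis, namely checking that in each of the four orientation patterns the three chosen sets really do cover $E_0$ and really do all lie in the same one of $\cT_1,\cT_2$.
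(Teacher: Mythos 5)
Your proof is correct and follows essentially the same route as the paper: the same partition $(A,E_0\setminus A)$ fed into adherence, the same sub-partition $(X_1,X_2)$ of one side, and the same two ingredients (agreement of $\cT_1$ and $\cT_2$ on sets of small $\lambda$-value, forced by the common induced tangle, together with the no-three-set-cover axiom). The only difference is the order of the steps --- the paper first locates $X_1,X_2$ in $\cT_1$ and $\cT_2$ via the covering axiom and then reads off a contradiction in the induced tangle $\cT$, whereas you first extract agreement from $\cT$ and then obtain the covering contradiction inside $\cT_1$ or $\cT_2$ --- which is a reordering rather than a different argument.
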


\begin{proof} Let $K=(E,\lambda)$ and $K_0=(E_0,\lambda_0)$.
Suppose that there is a tangle $\cT$ of order $k$ in $K$ that splits into two distinct tangles $\cT_1$ and $\cT_2$ of order $k$ in $K_0$.
There is a partition $(A_1,A_2)$ of $E_0$ such that $A_1\in \cT_1$ and $A_2\in \cT_2$; let $t=\lambda_0(A_1)$.
Up to symmetry we may assume that there is a partition $(X_1,X_2)$ of $A_1$ with $\lambda(X_1)\le t$ and $\lambda(X_2)\le t$.

Since $A_1\in \cT_1$ and $K_0$ cannot be covered by two sets in $\cT_1$, we have that
$\cT_1$ contains both $X_1$ and $X_2$. On the other hand, since $A_2\in \cT_2$ and $K_0$ cannot be covered by three sets in $\cT_2$, we have that
$\cT_2$ contains at least  one of  $X_1$ and $X_2$; up to symmetry we may assume that $X_2\in \cT_2$.
Since $\lambda(X_2)\le t$ and since $\cT$ is induced by both $\cT_1$ and $\cT_2$, we have $X
_2\in \cT$ and $E\setminus X_2\in \cT_2$, contrary to the tangle axioms.
\end{proof}

\section{Matroids}

For a set $X$ of elements in a matroid $M$ we define
$$ \lambda_M(X) := r(X)+r(E\setminus X) - r(M) +1.$$
The connectivity system associated with $M$ is $K(M)=(E(M),\lambda_M)$.

We rely on the following result; see Oxley~\cite[Lemma 8.5.3]{Oxley}.
\begin{lemma}\label{BC-ineq}
Let $A$ and $B$ be sets of elements in a matroid $M$. If $e$ is an element disjoint from $A$ and $B$, then
$$ \lambda_{M\del e}(A) + \lambda_{M\con e}(B) \ge \lambda_M(A\cap B) +\lambda_M(A\cup B\cup\{e\}) -1.$$
\end{lemma}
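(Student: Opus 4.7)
The plan is to prove the inequality by unfolding every $\lambda$ in terms of the underlying rank function $r = r_M$ and then matching up the eight resulting rank terms: four of them come from two applications of submodularity, and the leftover constants reduce to a trivial inequality about the single element $e$.

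First I would rewrite the two terms on the left using the rank functions of $M\del e$ and $M\con e$. For $X\subseteq E\setminus e$ one has $r_{M\del e}(X)=r(X)$ and $r_{M\con e}(X)=r(X\cup e)-r(e)$, so
$$\lambda_{M\del e}(A) = r(A) + r(E\setminus (A\cup e)) - r(E\setminus e) + 1,$$
$$\lambda_{M\con e}(B) = r(B\cup e) + r(E\setminus B) - r(E) - r(e) + 1,$$
using that $e\notin A\cup B$ to simplify the complements. The right-hand side expands directly from the definition of $\lambda_M$.

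Next I would apply submodularity of $r$ to two carefully chosen pairs. On $(A,\,B\cup e)$, since $e\notin A$, the intersection is $A\cap B$ and the union is $A\cup B\cup e$; on $(E\setminus(A\cup e),\,E\setminus B)$, since $e\notin B$, the intersection is $E\setminus(A\cup B\cup e)$ and the union is $E\setminus(A\cap B)$. These are exactly the four rank terms appearing on the right-hand side, so adding the two submodular inequalities cancels four rank terms from each side of the target inequality.

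After this cancellation, the desired inequality reduces to the scalar statement
$$r(E) - r(E\setminus e) + 1 \ge r(e),$$
which holds because $r(E\setminus e)\le r(E)$ and $r(e)\le 1$ for any single element $e$ (alternatively, one more submodular application to $(E\setminus e, \{e\})$). I do not expect any real obstacle here: the content of the lemma is the two submodularity steps, and the main care required is purely bookkeeping, namely making sure that, because $e$ lies outside both $A$ and $B$, the intersections and unions produced by submodularity collapse cleanly to the sets $A\cap B$ and $A\cup B\cup e$ (and their complements) that occur on the right-hand side. Once those identifications are verified the proof is just an accounting of constants.
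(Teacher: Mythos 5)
Your proof is correct, and every step checks out: the expansions of $\lambda_{M\del e}(A)$ and $\lambda_{M\con e}(B)$ via $r_{M\del e}(X)=r(X)$ and $r_{M\con e}(X)=r(X\cup e)-r(e)$ are right, the two submodularity applications to $(A,\,B\cup e)$ and to $(E\setminus(A\cup e),\,E\setminus B)$ do produce exactly the four rank terms of the right-hand side (using $e\notin A$ for the first and $e\notin B$ for the second), and the residual scalar inequality $r(E)-r(E\setminus e)+1\ge r(e)$ follows from monotonicity together with $r(e)\le 1$. Note that the paper itself gives no proof of this lemma: it is quoted from Oxley (Lemma 8.5.3), and your argument is essentially the standard textbook one, so you have supplied a self-contained justification of what the paper only cites. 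One small slip: your parenthetical alternative --- applying submodularity to $(E\setminus e,\{e\})$ --- does not work, since that application yields $r(E\setminus e)+r(e)\ge r(E)$, i.e.\ a lower bound $r(e)\ge r(E)-r(E\setminus e)$, which is the wrong direction; only your primary justification (monotonicity plus $r(e)\le 1$) closes the argument, and it does so completely.
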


This lemma implies that an element can be either deleted or contracted from a matroid in a way that does
not do too much damage to the connectivity. 
\begin{lemma}\label{bixby}
If $e$ is an element in a matroid $M$, then at least one of $K(M\del e)$ and $K(M\con e)$ adheres to $K(M)$.
\end{lemma}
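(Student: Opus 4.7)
My plan is to argue by contradiction: assume that neither $K(M\del e)$ nor $K(M\con e)$ adheres to $K(M)$ and derive a contradiction using Lemma~\ref{BC-ineq} twice. The two domination halves of the adherence definition are routine consequences of the standard deletion/contraction rank inequalities, so the real content is the partition condition. If $e$ is a loop or a coloop, then one of $\lambda_{M\del e}$ and $\lambda_{M\con e}$ agrees with $\lambda_M$ on every subset of $E\setminus\{e\}$, so the trivial split $(A,\emptyset)$ already witnesses adherence; hence I may assume $e$ is neither a loop nor a coloop.

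Let $(A,B)$ witness the failure of adherence for $K(M\del e)$ and $(A',B')$ witness the failure for $K(M\con e)$; set $t_d := \lambda_{M\del e}(A)$ and $t_c := \lambda_{M\con e}(A')$, and define the four intersections $P_{11}=A\cap A'$, $P_{12}=A\cap B'$, $P_{21}=B\cap A'$, $P_{22}=B\cap B'$, which partition $E(M)\setminus\{e\}$ (possibly with empty parts). Applying Lemma~\ref{BC-ineq} to $(A,A')$ and to $(A,B')$, and using that $\lambda_M$ is symmetric under complementation in $E(M)$ so that $\lambda_M(A\cup A'\cup\{e\})=\lambda_M(P_{22})$ and $\lambda_M(A\cup B'\cup\{e\})=\lambda_M(P_{21})$ (and noting $\lambda_{M\con e}(B')=t_c$), I obtain
\[
t_d+t_c \;\ge\; \lambda_M(P_{11})+\lambda_M(P_{22})-1
\qquad\text{and}\qquad
t_d+t_c \;\ge\; \lambda_M(P_{12})+\lambda_M(P_{21})-1.
\]

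Swapping $A\leftrightarrow B$ and/or $A'\leftrightarrow B'$ preserves both failure hypotheses and merely permutes the $P_{ij}$, so I may assume $\lambda_M(P_{11})=\max_{i,j}\lambda_M(P_{ij})$. The failure of adherence for $K(M\del e)$ on $(A,B)$ says that for every partition $(X_1,X_2)$ of $A$ or of $B$ (empty parts allowed) we have $\max(\lambda_M(X_1),\lambda_M(X_2))\ge t_d+1$. Applied to the partition $(P_{11},P_{12})$ of $A$, this together with the maximality of $P_{11}$ forces $\lambda_M(P_{11})\ge t_d+1$; the analogous statement for con on $(P_{11},P_{21})$ of $A'$ forces $\lambda_M(P_{11})\ge t_c+1$. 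Substituting into the first displayed inequality gives $\lambda_M(P_{22})\le\min(t_d,t_c)$, so neither $\lambda_M(P_{22})\ge t_d+1$ nor $\lambda_M(P_{22})\ge t_c+1$ holds. Consequently the split failures on $(P_{21},P_{22})$ of $B$ and on $(P_{12},P_{22})$ of $B'$ push the large value onto the other part, yielding $\lambda_M(P_{21})\ge t_d+1$ and $\lambda_M(P_{12})\ge t_c+1$. Adding these gives $\lambda_M(P_{12})+\lambda_M(P_{21})\ge t_d+t_c+2$, contradicting the second displayed inequality.

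The main obstacle I anticipate is not any deep estimate but careful bookkeeping: I need to verify that the four index symmetries line up cleanly and that the argument survives when some $P_{ij}$ is empty. In the degenerate case, a ``non-trivial'' split such as $(P_{11},P_{12})$ of $A$ collapses to a trivial split of $A$, but the adherence-failure hypothesis explicitly allows empty parts in $(X_1,X_2)$, so the same inequality $\lambda_M(P_{11})\ge t_d+1$ is still delivered and the rest of the argument goes through unchanged.
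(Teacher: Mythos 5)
Your proposal is correct and follows essentially the same route as the paper: assume both removals fail to adhere, form the four pairwise intersections, normalize so that the one of largest connectivity plays a distinguished role, and apply Lemma~\ref{BC-ineq} twice (once to force the ``opposite'' intersection to have small connectivity, once to derive the final contradiction). Your version just spells out the bookkeeping (the symmetry swaps, the empty-part degeneracies, and the loop/coloop remark) that the paper compresses into ``up to symmetry and duality.''
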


\begin{proof} If $K(M\del e)$ does not adhere to $K(M)$, then there is a partition $(A_1,A_2)$ of $E(M\del e)$ such that neither
$A_1$ nor $A_2$ admits a partition into two set of connectivity at most $\lambda_{M\del e}(A_1)$ in $M$.
If $K(M\con e)$ does not adhere to $K(M)$, then there is a partition $(A_3,A_4)$ of $E(M\con e)$ such that neither
$A_3$ nor $A_4$ admits a partition into two set of connectivity at most $\lambda_{M\con e}(A_3)$ in $M$.
Let $k_1=\lambda_{M\del e}(A_1)$ and $k_2=\lambda_{M\con e}(A_3)$.

Up to symmetry and duality we may assume that $\lambda(A_2\cap A_4)$  is the largest of $\lambda(A_1\cap A_3)$, $\lambda(A_1\cap A_4)$, $\lambda(A_2\cap A_3)$, and $\lambda(A_2\cap A_4)$. By our choice of $(A_1,A_2)$ and $(A_3,A_4)$ we have
$\lambda(A_2\cap A_4)>\max(k_1,k_2)$. Then, by Lemma~\ref{BC-ineq}, we have $\lambda(A_1\cap A_3)\le \min(k_1,k_2)$. Then, again 
by our choice of $(A_1,A_2)$ and $(A_3,A_4)$, we have
$\lambda(A_1\cap A_4)>k_1$ and $\lambda(A_2\cap A_3)>k_2$. But then we get a contradiction by applying  Lemma~\ref{BC-ineq} to $A_1$ and $A_4$.
\end{proof}

Note that Theorem~\ref{main} is implied by Lemmas~\ref{unsplit} and~\ref{bixby}.

\section{Pivot minors}

Let $A$ be the adjacency matrix of a simple graph $G=(V,E)$. The {\em cut-rank} of a set $X\subseteq V(G)$, denoted  $\rho_G(X)$,  is defined
to be the rank of the submatrix $A[X,V\setminus X]$, over GF$(2)$. Oum~\cite{Oum} shows that $\CR(G):=(V,\rho_G)$
is a connectivity system.

We assume familiarity with pivot minors; see~\cite{Oum}.
For an edge $e=uv$, we let $G\times e$ denote the graph obtained by pivoting on the edge $e$. The connectivity system $\CR(G)$ is invariant under pivoting; that is $\CR(G)=\CR(G\times e)$. We recall that there are effectively only two ways to remove a vertex under pivot minors; we can either delete a vertex or pivot on an edge incident with that vertex and then delete the vertex. More specifically, if $e$ is any edge incident with a vertex $v$ and
$H$ is a pivot minor of $G$ not containing $v$, then $H$ is a pivot minor of either $G-v$ or of $(G\times e)-v$.

The following result was proved by Oum~\cite{Oum}.
\begin{lemma}\label{pm-ineq}
Let $A$ and $B$ be vertex sets in a simple graph $G$. If $v$ is a vertex disjoint from $A$ and $B$ and $e$ is any edge incident with $v$, then
$$ \rho_{G-v}(A) + \rho_{(G\times e)-v}(B) \ge \rho_G(A\cap B) +\rho_G(A\cup B\cup\{e\}) -1.$$
\end{lemma}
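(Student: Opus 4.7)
The plan is to establish the inequality by direct linear algebra on the adjacency matrices over GF$(2)$, paralleling the proof of Bixby's inequality in the matroid setting (Lemma~\ref{BC-ineq}). Let $M_G$ denote the adjacency matrix of $G$. First I would fix the block decomposition of $M_G$ induced by the partition of $V$ into $A\cap B$, $A\setminus B$, $B\setminus A$, $\{v\}$, and $V\setminus(A\cup B\cup\{v\})$, keeping the second endpoint $u$ of the edge $e=uv$ separate within its part. Each of the four cut-ranks appearing in the statement is then the GF$(2)$-rank of an explicit submatrix of $M_G$ (or, for $\rho_{(G\times e)-v}(B)$, of $M_{G\times e}$). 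The pivot invariance $\rho_{G\times e}=\rho_G$ lets one translate between the two adjacency matrices, exploiting that the off-diagonal blocks of $M_{G\times e}$ are obtained from those of $M_G$ by swapping the $u$- and $v$-neighbourhoods on $V\setminus\{u,v\}$, together with a rank-at-most-two correction inside the $V\setminus\{u,v\}$ block that will not affect the cuts we care about.

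I would then combine two standard ingredients: the submodular inequality for matrix rank, and the observation that deleting (or adding) a single row or column changes the rank by at most one. Applied naively to $\rho_{G-v}(A)$ and $\rho_{(G\times e)-v}(B)$ separately, these two facts give the claimed inequality with a loss of $2$ on the right-hand side rather than the desired $1$, so some genuine interaction between the two operations must be exploited.

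The main obstacle is to recover the sharper constant $-1$. This requires using the pivot swap in an essential way: the column of $M_G$ corresponding to $v$ (restricted to the rows indexed by $A$) and the column of $M_{G\times e}$ corresponding to $v$ (restricted to the rows indexed by $B$) are linked, because after the swap the second column is essentially the $u$-neighbourhood of $G$ restricted to $B$. Together, the two ``missing'' columns share at least one dimension with the row of $v$ in the large cut $(A\cup B\cup\{v\},\, V\setminus(A\cup B\cup\{v\}))$ that appears on the right-hand side. Packaging the rank manipulations so that this shared dimension is visible is the technical heart of the argument; once that single redundant dimension is accounted for, the naive $-2$ collapses to $-1$ and the remaining steps are routine rearrangement.
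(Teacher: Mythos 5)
The paper does not prove this lemma at all: it is stated with the remark ``The following result was proved by Oum~\cite{Oum}'' and used as a black box, so there is no in-paper argument to compare yours against. The relevant benchmark is Oum's original proof, which is a concrete GF$(2)$ matrix-rank computation around the principal pivot on the $\{u,v\}$ block.

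Measured against that, your proposal has a genuine gap rather than being a complete alternative proof. You correctly identify that naive use of rank submodularity together with ``deleting one row or column changes rank by at most one'' only yields the inequality with $-2$ in place of $-1$, and you correctly locate the difficulty in the interaction between the $v$-column seen by the $A$-cut in $G$ and the $v$-column seen by the $B$-cut in $G\times e$ (which, after the pivot, records the $u$-neighbourhood). But the step that closes the gap --- exhibiting the ``shared dimension'' and arranging the rank manipulations so that it is counted only once --- is exactly the content of the lemma, and you explicitly defer it as ``the technical heart of the argument'' without carrying it out; a reader cannot reconstruct the proof from what is written. In addition, your parenthetical claim that the rank-at-most-two correction to the $V\setminus\{u,v\}$ block under pivoting ``will not affect the cuts we care about'' is unjustified and in general false: $\rho_{(G\times e)-v}(B)$ is the rank of a submatrix of the pivoted adjacency matrix whose entries between $B\setminus\{u\}$ and $V\setminus(B\cup\{u,v\})$ are precisely the ones toggled by that correction, so any honest completion of your plan must track it rather than discard it. To repair the write-up, either carry out the block computation in full (this is what Oum does, via an explicit rank identity for the pivot) or simply cite Oum as the paper does.
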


The proof of the following lemma is essentially the same as that of Lemma~\ref{bixby}.
\begin{lemma}\label{pm-bixby}
If $v$ is a vertex of a simple graph $G$ and $e$ is any edge incident with $v$, then at least one of $\CR(G-v)$ and $\CR((G\times e)-v)$ adheres to $\CR(G)$.
\end{lemma}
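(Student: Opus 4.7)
The plan is to mimic the proof of Lemma~\ref{bixby} verbatim, substituting Lemma~\ref{pm-ineq} for Lemma~\ref{BC-ineq} and $G-v$, $(G\times e)-v$ for $M\del e$, $M\con e$. Domination of $\CR(G-v)$ and $\CR((G\times e)-v)$ by $\CR(G)$ is immediate, since cut-rank is monotone under restriction and invariant under pivoting, so a failure of adherence automatically supplies the witnessing partitions I need.

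Suppose, for contradiction, that neither $\CR(G-v)$ nor $\CR((G\times e)-v)$ adheres to $\CR(G)$. Then there is a partition $(A_1,A_2)$ of $V(G)\setminus\{v\}$ with $k_1:=\rho_{G-v}(A_1)$ such that neither part splits into two subsets of cut-rank $\le k_1$ in $G$, and a partition $(A_3,A_4)$ of $V(G)\setminus\{v\}$ with $k_2:=\rho_{(G\times e)-v}(A_3)$ enjoying the analogous property with $k_2$ in place of $k_1$. By symmetry, after possibly swapping $A_1\leftrightarrow A_2$ or $A_3\leftrightarrow A_4$, I may assume that $\rho_G(A_2\cap A_4)$ is the largest of the four values $\rho_G(A_i\cap A_j)$. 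The failure conditions applied to the partitions $(A_2\cap A_3,A_2\cap A_4)$ of $A_2$ and $(A_1\cap A_4,A_2\cap A_4)$ of $A_4$ then force $\rho_G(A_2\cap A_4)>\max(k_1,k_2)$. Feeding $A_2,A_4$ into Lemma~\ref{pm-ineq} (and using that the complement of $A_2\cup A_4\cup\{v\}$ in $V(G)$ is $A_1\cap A_3$) yields
$$k_1+k_2 \;\ge\; \rho_G(A_2\cap A_4)+\rho_G(A_1\cap A_3)-1,$$
whence $\rho_G(A_1\cap A_3)\le \min(k_1,k_2)$. The failure conditions for $A_1$ and $A_3$ then give $\rho_G(A_1\cap A_4)>k_1$ and $\rho_G(A_2\cap A_3)>k_2$, and a second application of Lemma~\ref{pm-ineq}, this time to $A_1$ and $A_4$, produces $k_1+k_2\ge (k_1+1)+(k_2+1)-1$, the desired contradiction.

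The only conceptual point that requires thought is the ``up to duality'' clause from the proof of Lemma~\ref{bixby}: pivot minors have no matroid duality. Fortunately the appeal to duality there is inessential---swapping labels within either partition already exhausts all four candidates for which intersection has the largest cut-rank---so plain symmetry suffices in the graph setting, and no extra ingredient beyond Lemma~\ref{pm-ineq} is required.
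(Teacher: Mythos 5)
Your proof is correct and follows the paper's argument essentially verbatim: the paper likewise transplants the proof of Lemma~\ref{bixby}, invoking Lemma~\ref{pm-ineq} in place of Lemma~\ref{BC-ineq}, and writes ``up to symmetry and pivoting'' where the matroid proof says ``up to symmetry and duality''. Your observation that relabelling within each partition already suffices to place the largest intersection in the $A_2\cap A_4$ slot is accurate, so the appeal to pivoting, like the appeal to duality, is indeed dispensable.
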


\begin{proof} If $\CR(G-v)$ does not adhere to $\CR(G)$, then there is a partition $(A_1,A_2)$ of $V(G)\setminus\{v\}$ such that neither
$A_1$ nor $A_2$ admits a partition into two set of cut-rank at most $\rho_{G-v}(A_1)$ in $G$.
If $\CR((G\times e)-v)$ does not adhere to $\CR(G)$, then there is a partition $(A_3,A_4)$ of $V(G)\setminus\{v\}$ such that neither
$A_3$ nor $A_4$ admits a partition into two set of cut-rank at most $\rho_{(G\times e)-v}(A_3)$ in $G$.
Let $k_1=\rho_{G-v}(A_1)$ and $k_2=\rho_{(G\times e)-v}(A_3)$.

Up to symmetry and pivoting we may assume that $\rho_G(A_2\cap A_4)$  is the largest of $\rho_G(A_1\cap A_3)$, $\rho_G(A_1\cap A_4)$, $\rho_G(A_2\cap A_3)$, and $\rho_G(A_2\cap A_4)$. By our choice of $(A_1,A_2)$ and $(A_3,A_4)$ we have
$\rho_G(A_2\cap A_4)>\max(k_1,k_2)$. Then, by Lemma~\ref{pm-ineq}, we have $\rho_G(A_1\cap A_3)\le \min(k_1,k_2)$. Then, again 
by our choice of $(A_1,A_2)$ and $(A_3,A_4)$, we have
$\rho_G(A_1\cap A_4)>k_1$ and $\rho_G(A_2\cap A_3)>k_2$. But then we get a contradiction by applying  Lemma~\ref{pm-ineq} to $A_1$ and $A_4$.
\end{proof}

The following result is implied by Lemmas~\ref{unsplit} and~\ref{pm-bixby}.
\begin{theorem}\label{pm}
If $v$ is a vertex of a simple graph $G$ and $e$ is any edge incident with $v$, then there is at least one choice of $H$ in $\{G-v,\,(G\times e)-v\}$ 
such that no tangle of $\CR(G)$ splits in $\CR(H)$.
\end{theorem}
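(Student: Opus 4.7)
The plan is to combine Lemmas~\ref{unsplit} and~\ref{pm-bixby} in exactly the same way that Theorem~\ref{main} is deduced from Lemmas~\ref{unsplit} and~\ref{bixby}. Given the vertex $v$ and an incident edge $e$, I would first invoke Lemma~\ref{pm-bixby} to select $H\in\{G-v,\,(G\times e)-v\}$ so that the connectivity system $\CR(H)$ adheres to $\CR(G)$.

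Having made this choice, I would then apply Lemma~\ref{unsplit} with $K=\CR(G)$ and $K_0=\CR(H)$. The hypothesis that $K_0$ adheres to $K$ is precisely what was just arranged, so the lemma's conclusion---that no tangle of $K$ splits in $K_0$---immediately yields the statement of Theorem~\ref{pm}.

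The only thing to check is that Lemma~\ref{unsplit} is stated and proved in purely abstract terms about connectivity systems, with no matroid-specific content; it therefore applies without modification once one knows that $\CR(G)$ and $\CR(H)$ are connectivity systems, and this is guaranteed by the result of Oum cited at the start of Section~4. There is no real obstacle here: all of the genuine work has already been invested in Lemma~\ref{pm-ineq} (the submodular-type cut-rank inequality across $v$ and $e$) and in Lemma~\ref{pm-bixby}, which leverages that inequality via the same four-intersection argument used in the matroid case. I therefore expect the written-out proof to consist of essentially a single sentence chaining the two lemmas.
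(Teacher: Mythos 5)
Your proposal matches the paper exactly: the paper simply notes that Theorem~\ref{pm} is implied by Lemmas~\ref{unsplit} and~\ref{pm-bixby}, which is precisely the one-sentence chaining you describe. Your observation that Lemma~\ref{unsplit} is purely about abstract connectivity systems, and hence applies verbatim once Oum's result guarantees that $\CR(G)$ and $\CR(H)$ are connectivity systems, is the correct and complete justification.
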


For a vertex $v$ of $G$ we let $G *v$ denote the graph obtained from $G$ by locally complementing at $v$. Readers who are familiar with vertex minors will see that Theorem~\ref{pm} implies:
\begin{theorem}\label{mm}
If $v$ is a vertex of a simple graph $G$ and $e$ is any edge incident with $v$, then for at least two choices of $H$ among $(G-v,\,(G\times e)-v,\, (G*v)-v)$ no tangle of $\CR(G)$ splits in $\CR(H)$.
\end{theorem}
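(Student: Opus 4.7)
My plan is to apply Theorem~\ref{pm} three times---once each to the graphs $G$, $G*v$, and $G*u$, where $u$ denotes the other endpoint of $e=uv$---and in each case identify the two candidate graphs produced by Theorem~\ref{pm} with two of the three graphs $H_1:=G-v$, $H_2:=(G\times e)-v$, and $H_3:=(G*v)-v$, up to cut-rank equivalence. Call $H_i$ \emph{good} if no tangle of $\CR(G)$ splits in $\CR(H_i)$. Since cut-rank is invariant under local complementation, $\CR(G)=\CR(G*v)=\CR(G*u)$, and $\CR(H)=\CR(H*w)$ for any graph $H$ and vertex $w$; in particular, goodness of an $H_i$ is unchanged if we replace it by any graph obtained from it by local complementations.

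Theorem~\ref{pm} applied to $(G,e)$ directly gives Claim A: $H_1$ is good or $H_2$ is good. For the second application, note that $e$ is still an edge of $G*v$, since $*v$ does not alter edges incident to $v$. Using the standard pivot identity $G*u*v*u=G*v*u*v$ (valid since $uv$ is an edge) together with $*u*u=\mathrm{id}$,
\[
(G*v)\times uv \;=\; G*v*u*v*u \;=\; (G*u*v*u)*u \;=\; G*u*v,
\]
whereas $G\times uv = G*u*v*u$. Combined with the elementary identity $(H*u)-v=(H-v)*u$ for $u\ne v$, this gives $H_2=(((G*v)\times e)-v)*u$, so $H_2$ and $((G*v)\times e)-v$ are cut-rank equivalent. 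Theorem~\ref{pm} applied to $(G*v,e)$ then yields Claim B: $H_3$ is good or $H_2$ is good.

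The third application, to $(G*u,e)$, is entirely analogous: $(G*u)-v=(G-v)*u$ is cut-rank equivalent to $H_1$, and a parallel computation gives $(G*u)\times uv=G*v*u$, so $((G*u)\times e)-v=((G*v)-v)*u$ is cut-rank equivalent to $H_3$. Theorem~\ref{pm} then yields Claim C: $H_1$ is good or $H_3$ is good.

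Since Claims A, B, and C cover all three pairs from $\{H_1,H_2,H_3\}$, any configuration in which two of the $H_i$ were bad would violate one of the three Claims; hence at least two of them are good, which is the content of Theorem~\ref{mm}. The main obstacle is bookkeeping: verifying the three identities $(G*v)\times uv = G*u*v$, $(G*u)\times uv = G*v*u$, and $(H*u)-v=(H-v)*u$. Each is a short manipulation of the definition $X\times uv := X*u*v*u$ together with the pivot identity and involutivity of local complementation, but the entire reduction hinges on them being executed cleanly.
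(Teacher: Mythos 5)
Your proof is correct and is exactly the argument the paper leaves to the reader: the paper offers no written proof of Theorem~\ref{mm}, only the remark that it follows from Theorem~\ref{pm}, and the intended derivation is precisely your three applications of Theorem~\ref{pm} to $G$, $G*v$, and $G*u$ combined with invariance of cut-rank under local complementation. Your identities $(G*v)\times uv=G*u*v$, $(G*u)\times uv=G*v*u$, and $(H*u)-v=(H-v)*u$ all check out, and the covering of all three pairs $\{H_i,H_j\}$ by Claims A, B, C does yield that at least two of the $H_i$ are good.
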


\section{Entangled connectivity systems}

We  call a connectivity system {\em $k$-entangled} if for each $t\le k$ there is at most one tangle of order $t$. We call a matroid {\em $k$-entangled} when its connectivity system is. Since $k$-connected matroids are $k$-entangled, we consider $k$-entanglement to be a weakening of $k$-connectivity. Finding elements to delete or contract keeping $k$-connectivity is difficult or impossible, even for relatively small $k$; see, for example,~\cite{CMO,GW,Hall}. However, as an immediate application of Theorem~\ref{main} we get:
 \begin{theorem}\label{entangled}
 Any element in a $k$-entangled matroid can be removed by either deletion or contraction keeping it $k$-entangled.
 \end{theorem}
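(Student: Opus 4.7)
The plan is to derive Theorem~\ref{entangled} as a short application of Theorem~\ref{main} together with the definition of $k$-entanglement. Given an element $e$ of a $k$-entangled matroid $M$, I will first invoke Theorem~\ref{main} to choose $N\in\{M\del e,\,M\con e\}$ with the property that no tangle of $K(M)$ splits in $K(N)$; the goal is then to argue that this $N$ is itself $k$-entangled.

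The argument will proceed by contradiction. Suppose $N$ is not $k$-entangled, so there are two distinct tangles $\cT_1$ and $\cT_2$ in $K(N)$ of some common order $t\le k$. Because $N$ is obtained from $M$ by a single deletion or contraction, a direct rank-function check yields $\lambda_N(X)\le \lambda_M(X)$ for every $X\subseteq E(N)$; that is, $K(M)$ dominates $K(N)$ in the sense of Section~\ref{sec2}. Consequently, the two tangles $\cT_1$ and $\cT_2$ of $K(N)$ induce tangles $\cT_1'$ and $\cT_2'$ of order $t$ in $K(M)$.

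Since $M$ is $k$-entangled and $t\le k$, the system $K(M)$ has at most one tangle of order $t$, forcing $\cT_1'=\cT_2'$. But then $\cT_1'$ is a tangle of $K(M)$ that splits in $K(N)$, contradicting the choice of $N$ coming from Theorem~\ref{main}. Hence $N$ is $k$-entangled, as required.

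I do not anticipate a serious obstacle here; the statement is essentially a bookkeeping consequence of Theorem~\ref{main}. The one technical point that needs a line of verification is that $K(M)$ dominates $K(N)$, which is needed to make the induced tangle $\cT_i'$ well-defined. This is a standard consequence of the rank-function definition of $\lambda_M$ via a short case analysis (whether $e$ is a loop or coloop, and whether it is spanned by the complement of $X$), and once it is in hand the entire proof collapses into the three-line contradiction above.
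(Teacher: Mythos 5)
Your proposal is correct and is exactly the argument the paper has in mind: the paper derives Theorem~\ref{entangled} as an ``immediate application'' of Theorem~\ref{main}, and your contradiction (two order-$t$ tangles in $N$ would induce, by $k$-entanglement of $M$, the \emph{same} tangle in $M$, which would therefore split) is the intended bookkeeping, together with the standard observation that $K(M)$ dominates $K(M\del e)$ and $K(M\con e)$. No gaps.
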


We conclude with some observations on the structure of $k$-entangled connectivity systems which should help in applications of Theorem~\ref{entangled}.
Note that, if a connectivity system $K=(E,\lambda)$ has no tangle of order $k$ then it is $k$-entangled if and only if it is $(k-1)$-entangled. Thus we may as well assume that $K$ 
has a tangle $\cT$ of order $k$. We will explain structurally, without explicit reference to tangles, what distinguishes the sets in $\cT$ from the other sets in $\cS_k(K)$.

A tree is {\em cubic} if each of its vertices has degree $1$ or $3$; the degree-$1$ vertices are the {\em leaves}. A {\em partial branch-decomposition} of $K$ is a pair $(T, f)$
where $T$ is a cubic tree and $f$ is a function that maps the elements of $E$ to the leaves of $T$. 
A leaf $v$ of $T$ is said to {\em display} a set $X\subseteq E$ if $X$ is the set of all elements that $f$ maps to $v$. For an edge $e$ of $T$, the graph $T-e$ has two components and and we partition $E$ into two sets $(A_e,B_e)$ according to which component they are mapped to by $f$. The {\em width} of the edge $e$ is $\lambda(A_e)$ and the {\em width} of the partial tree-decomposition is the maximum of its edge-widths. 

\begin{lemma}\label{easy1}
Let $\cT$ be a tangle of order $k$ in a  connectivity system $K=(E,\lambda)$. If $(T,f)$ is a partial branch-decomposition of width at most $k-1$ then there is a unique leaf of $T$ that displays a set that is not in $\cT$.
\end{lemma}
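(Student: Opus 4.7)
The plan is to orient each edge of $T$ toward its $\cT$-side and then identify the desired leaf as the unique source of this orientation. For each edge $e$ of $T$, the width hypothesis gives $\lambda(A_e)\le k-1$, so both $A_e$ and $E\setminus A_e$ lie in $\cS_k(K)$; exactly one of them belongs to $\cT$ by the tangle axiom, and I orient $e$ toward the component of $T-e$ whose displayed set is that $\cT$-side.

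The first main step will be to show that no internal vertex $v$ is a source or a sink of this orientation. The three subtrees at $v$ display sets $Y_1,Y_2,Y_3$ partitioning $E$, each of connectivity at most $k-1$. If all three edges at $v$ pointed away from $v$, then $Y_1,Y_2,Y_3\in\cT$ and their union is $E$, contradicting the no-three-cover axiom. Dually, if all three pointed toward $v$, then $E\setminus Y_1,E\setminus Y_2,E\setminus Y_3\in\cT$, and since the $Y_i$ are pairwise disjoint these complements also cover $E$, another contradiction.

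To finish, I will deduce existence and uniqueness of a non-$\cT$ leaf. For existence, any orientation of a finite tree is a DAG and so has a source, which by the previous step must be a leaf; and a leaf is a source precisely when its displayed set is not in $\cT$. For uniqueness, if $u_1,u_2$ were two distinct such leaves with displayed sets $X_1,X_2\notin\cT$, then $E\setminus X_1,E\setminus X_2\in\cT$ and $X_1\cap X_2=\emptyset$, so already $(E\setminus X_1)\cup(E\setminus X_2)=E$, again contradicting the no-three-cover axiom (for instance by reusing one of the two sets as the third).

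The step I expect to be most delicate is the uniqueness step, since it uses that two sets in $\cT$ cannot cover $E$; this requires reading ``no three sets in $\cT$'' as allowing repetitions, which is the standard convention. If strict distinctness were insisted upon, I would instead produce a genuinely new third tangle set by picking any $Z\in\cS_k(K)$ distinct from $E\setminus X_1$ and $E\setminus X_2$ and letting the tangle axiom choose between $Z$ and $E\setminus Z$; one of them lies in $\cT$ and, together with $E\setminus X_1$ and $E\setminus X_2$, gives three distinct tangle sets with union $E$.
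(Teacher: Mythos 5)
Your proof is correct and follows essentially the same route as the paper: orient each edge of $T$ according to which side of the induced partition lies in $\cT$, use the acyclicity of the orientation to find an extremal vertex, rule out internal vertices via the three-set cover axiom, and get uniqueness from the fact that two sets of $\cT$ cannot cover $E$ (a reading of the axiom the paper itself uses, e.g.\ in the proof of Lemma~2.1). The only differences are cosmetic: your orientation convention is reversed, and you argue existence directly via a source rather than by contradiction via a sink.
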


\begin{proof} There is at most one leaf that displays a set not in $\cT$ since otherwise we can cover $K$ with two sets in $\cT$.

Suppose that each leaf displays a set in $\cT$.
Each edge $e$ determines a partition $(A_e,B_e)$ of $E$ and $\cT$ contains exactly one of $A_e$ and $B_e$; we orient the edge away from the side containing the set in $\cT$. Note that, in particular,  the edges are oriented away from the leaves. This orientation has no directed cycles, so there is a vertex with all incident edges oriented towards it.
But then we can cover $E$ with three sets in $\cT$, contrary to the definition of a tangle.
\end{proof}

A set $X\subseteq E$ is {\em $k$-branched} if there is a partial branch decomposition $(T, f)$ of $K$ of width at most $k$ such that $E\setminus X$ is displayed by a leaf and every other leaf displays at most one element of $X$. We say that $X$ is {\em weakly $k$-branched} if $\lambda(X)\le k$ and there is a partial branch decomposition $(T, f)$ of $K$ of width at most $k$ such that each leaf either displays a subset of $E\setminus X$ or it displays a singleton. It is an easy consequence of Lemma~\ref{easy1} that every  weakly $(k-1)$-branched set is contained in every tangle of order at least $k$. The converse also holds for $k$-entangled connectivity systems.

\begin{lemma}\label{branched}
Let $\cT$ be a tangle of order $k$ in a $k$-entangled connectivity system $K=(E,\lambda)$. Then a set $X\in \cS_k(K)$ is weakly $\lambda(X)$-branched if and only if $X\in\cT$.
\end{lemma}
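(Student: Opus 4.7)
The ``only if'' direction is the easy consequence of Lemma~\ref{easy1} pointed out in the paragraph preceding the statement. Given a partial branch decomposition $(T,f)$ of $K$ of width at most $\ell:=\lambda(X)$ witnessing that $X$ is weakly $\ell$-branched, Lemma~\ref{easy1} produces a unique leaf $v^*$ displaying a set outside $\cT$; since every singleton $\{e\}$ lies in $\cT$ (its complement has forbidden size $|E|-1$), $v^*$ must be non-singleton and hence displays some $L\subseteq E\setminus X$. Root $T$ at $v^*$ and orient each edge away from its $\cT$-side, so that arrows flow towards $v^*$. Assuming $|X|\ge 2$ (the cases $|X|\le 1$ follow immediately from the fact that singletons and $\emptyset$ lie in $\cT$), let $u$ be the lowest common ancestor of the leaves $\{v_x:x\in X\}$: the three sides obtained by deleting $u$ partition $E$ into two sets in $\cT$ (the ``child'' sides $A_{w_1}, A_{w_2}$) and one (the ``parent'' side) disjoint from $X$; if $E\setminus X$ were in $\cT$ then $A_{w_1}, A_{w_2}, E\setminus X$ would be three sets in $\cT$ with union $E$, violating tangle axiom~2, so $X\in\cT$.

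For the converse, I induct on $|X|$. The cases $|X|\le 1$ are handled by the trivial cubic tree consisting of a single edge with two leaves displaying $X$ and $E\setminus X$, of width $\lambda(X)=\ell$.

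Now assume $|X|\ge 2$. The key step is to exhibit a splitter: a set $Y\in\cT$ with $\lambda(Y)\le\ell$ for which both $Y\cap X$ and $X\setminus Y$ are non-empty. Suppose no such $Y$ exists; then every $A\in\cS_{\ell+1}(K)$ satisfies $A\cap X=\emptyset$ or $A\supseteq X$, and the family
\[\cT^{\dagger}:=\{A\in\cS_{\ell+1}(K):A\cap X=\emptyset\}\]
obeys the three tangle axioms of order $\ell+1$: axiom~1 by the dichotomy, axiom~2 because three subsets of $E\setminus X$ have union avoiding $X\ne\emptyset$, and axiom~3 because each such $A$ has size at most $|E|-|X|\le|E|-2$. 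Since $X$ belongs to $\cT\cap\cS_{\ell+1}(K)$ but not to $\cT^{\dagger}$, these are two distinct tangles of order $\ell+1$, contradicting the $(\ell+1)$-entanglement of $K$ (which follows from $k$-entanglement, since $\ell+1\le k$). Hence the splitter $Y$ exists.

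With the splitter in hand, standard uncrossing --- the low-$\lambda$ members of $\cT$ are closed under intersection and union via the 3-cover axiom --- lets us refine $Y$ so that $Y\subsetneq X$, $\lambda(X\setminus Y)\le\ell$, and both $Y$ and $X\setminus Y$ belong to $\cT$. Applying the inductive hypothesis to these strictly smaller sets yields partial branch decompositions of $K$ of width at most $\ell$ displaying $Y$ and $X\setminus Y$ respectively as singletons. Gluing the two trees along a spine edge representing the separation $(Y,E\setminus Y)$ produces the desired partial branch decomposition of $K$ displaying all of $X$ as singletons. The most delicate point is this gluing: on each side of $(Y,E\setminus Y)$ the opposite side must be collapsed to a single bundle of connectivity $\lambda(Y)\le\ell$, and one has to verify that every internal edge of the resulting cubic tree still witnesses a set of $K$-connectivity at most $\ell$, which uses the width bounds delivered by the two inductive decompositions together with the shared boundary of width $\lambda(Y)$.
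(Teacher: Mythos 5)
Your ``only if'' direction is fine (the lowest-common-ancestor argument is a correct, if somewhat elaborate, way to extract the three-set cover from Lemma~\ref{easy1}), and your splitter argument is genuinely correct and nicely self-contained: if no $Y\in\cT$ with $\lambda(Y)\le\ell$ crosses $X$, then $\cT^{\dagger}$ and the truncation $\cT\cap\cS_{\ell+1}(K)$ are two distinct tangles of order $\ell+1\le k$, contradicting $k$-entanglement. The gap lies in the last two steps. First, ``standard uncrossing'' does not deliver a partition of $X$ into nonempty parts $Y$ and $X\setminus Y$ with \emph{both} $\lambda(Y)\le\ell$ and $\lambda(X\setminus Y)\le\ell$: submodularity applied to the splitter and $X$ gives $\lambda(Y\cap X)+\lambda(Y\cup X)\le 2\ell$, and applied to $E\setminus Y$ and $X$ gives $\lambda(X\setminus Y)+\lambda(Y\setminus X)\le 2\ell$, and neither inequality (nor any exhibited choice of splitter) bounds both corner terms $\lambda(Y\cap X)$ and $\lambda(X\setminus Y)$ by $\ell$ simultaneously. (Membership of the parts in $\cT$ is not the issue --- tangles are closed under taking subsets of connectivity less than $k$ --- the widths are.) Second, even granted such a partition, the inductive hypothesis only yields \emph{weakly} branched witnesses: in the witness for $Y$, the elements of $X\setminus Y$ may be scattered among leaves displaying subsets of $E\setminus Y$, so there is no spine edge to glue along, and re-mapping those elements to a single leaf changes every edge width. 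Controlling that re-mapping is exactly what Lemma~\ref{linked} is for, and it needs a $\kappa$-linkedness hypothesis you have not arranged. These two steps are precisely the content of the branch-width/tangle duality theorem, so the induction as sketched does not close.

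For comparison, the paper sidesteps all of this by invoking Lemma~\ref{duality} (\cite[Theorem~3.3]{GGRW}): since the truncation of $\cT$ to order $\lambda(X)+1\le k$ is the unique tangle of that order and contains $X$, the collection $\{E\setminus X\}\cup\{\{x\}:x\in X\}$ cannot extend to a tangle of order $\lambda(X)+1$, and duality immediately produces a conforming partial branch-decomposition of width at most $\lambda(X)$. If you want a proof that does not quote that theorem, you are in effect reproving it; your splitter lemma is a sensible first step, but the uncrossing and gluing are where the real work lies and they are missing here.
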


The harder direction of Lemma~\ref{branched} is implied by the following result that is proved in~\cite[Theorem~3.3]{GGRW}.
For a collection $\cS$ of subsets of $E$, we say that a partial branch-decomposition $(T,f)$ {\em conforms} with $\cS$ if each of its leaves displays a subset of a set in $\cS$.
\begin{lemma} \label{duality}
Let $K=(E,\lambda)$ be a connectivity system and let $\cS\subseteq \cS_k(K)$ such that the union of the sets in $\cS$ is $E$.
Then $\cS$ extends to a tangle of order $k$ if and only if there is no partial branch-decomposition of width at most $k-1$ that conforms to $\cS$.
\end{lemma}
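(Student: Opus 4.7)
My plan is to prove both directions of the equivalence. The forward direction follows easily from Lemma~\ref{easy1} together with a downward closure property of tangles. The backward direction is the substantive content and follows the classical tangle--branchwidth duality template by induction.

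\textbf{Forward direction.} I first observe that tangles are downward closed within $\cS_k(K)$: if $Y\in\cT$, $X\in\cS_k(K)$, and $X\subseteq Y$, then $X\in\cT$. Indeed, otherwise $E\setminus X\in\cT$ and $Y\cup(E\setminus X)=E$, so the three sets $Y$, $Y$, $E\setminus X$ in $\cT$ would cover $E$, violating a tangle axiom. Now suppose $\cS$ extends to $\cT$ and $(T,f)$ is a partial branch-decomposition of width at most $k-1$ that conforms to $\cS$. Each leaf of $T$ displays a set $X$ with $X\subseteq S$ for some $S\in\cS\subseteq\cT$, and $\lambda(X)\le k-1$ places $X$ in $\cS_k(K)$, so by downward closure $X\in\cT$. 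Every leaf thus displays a set in $\cT$, contradicting Lemma~\ref{easy1}.

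\textbf{Backward direction.} I would argue the contrapositive: assume $\cS$ does not extend to a tangle of order $k$ and construct a conforming partial branch-decomposition of width at most $k-1$. Without loss of generality $\cS$ is downward closed in $\cS_k(K)$, since replacing $\cS$ by its downward closure preserves both non-extendibility (any tangle extending $\cS$ must, by the closure property above, contain the entire closure) and the class of conforming decompositions. The induction is on $|\cS_k(K)|-|\cS|$. In the base case this quantity is zero, so for each $A\in\cS_k(K)$ at least one of $A$ and $E\setminus A$ is in $\cS$; the failure of the tangle axioms is then witnessed explicitly by one of: $\cS$ contains both $A$ and $E\setminus A$ for some $A$; $\cS$ contains a set of size $|E|-1$; or three sets of $\cS$ union to $E$. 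Each obstruction, combined with the downward closure of $\cS$, admits a direct small conforming decomposition (for instance, a two-leaf tree realising the partition $(A,E\setminus A)$ in the first case).

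For the inductive step, pick $X\in\cS_k(K)$ with $X,E\setminus X\notin\cS$. By hypothesis neither of the downward closures of $\cS\cup\{X\}$ and $\cS\cup\{E\setminus X\}$ extends to a tangle, so by induction each admits a conforming partial branch-decomposition $(T_1,f_1)$, $(T_2,f_2)$ of width at most $k-1$. I would splice these along the cut $(X,E\setminus X)$: extract from $T_1$ a subtree whose displayed union is $X$, extract from $T_2$ a subtree whose displayed union is $E\setminus X$, and join them across a single new edge of width $\lambda(X)\le k-1$.

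\textbf{Main obstacle.} The splicing step is the technical heart of the argument. The leaves of $T_1$ displaying subsets of $X$ need not form a contiguous subtree, so extracting a subtree whose displayed union is exactly $X$ requires the standard uncrossing manipulation on $T_1$: repeatedly replace crossing pairs of edges by less crossing ones, using submodularity of $\lambda$ to keep each new edge's width bounded by the maximum of the two original widths, while maintaining $\cS\cup\{X\}$-conformity throughout. Iterating this procedure until the $X$-leaves form a subtree, performing the same on $T_2$, and finally gluing the two pieces, is the bookkeeping-intensive portion of the proof. This follows the approach of~\cite{GGRW} and ultimately Robertson and Seymour's original tangle--branchwidth duality.
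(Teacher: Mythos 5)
First, a point of comparison: the paper does not prove Lemma~\ref{duality} at all --- it is quoted from~\cite[Theorem~3.3]{GGRW} --- so the benchmark is that external proof rather than anything in the present text. Your forward direction is correct and complete: tangles are indeed closed under taking subsets within $\cS_k(K)$ (two sets of $\cT$ covering $E$ already violate the three-set axiom, a reading the paper itself uses in the proof of Lemma~\ref{unsplit}), each leaf of a width-$(k-1)$ decomposition displays a set of $\cS_k(K)$, and Lemma~\ref{easy1} then gives the contradiction. That is also the easy half of the statement.

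The backward direction has a genuine gap, and it sits exactly where you flag it: the splicing step. Your induction scheme (downward-close $\cS$, pick $X$ with $X,E\setminus X\notin\cS$, apply induction to $\cS\cup\{X\}$ and $\cS\cup\{E\setminus X\}$) is a sound skeleton, but the claim that the two resulting decompositions can be glued along the partition $(X,E\setminus X)$ by an uncrossing procedure whose new widths are bounded by the maxima of old widths is not something submodularity gives you for free. Extracting from $(T_1,f_1)$ a rooted decomposition whose displayed union is exactly $X$ amounts to re-mapping all of $E\setminus X$ to a single leaf, which is precisely the operation treated in the paper's Lemma~\ref{linked} --- and that lemma requires the linkedness hypothesis $\lambda(X)=\kappa_K(X,Y)$; without it the inequality $\lambda(A\cap X)\le\lambda(A)$ can simply fail (think of $X$ a subset of one side of a $1$-separation: $\lambda$ is not monotone under intersection), so widths can increase when you re-map. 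For an arbitrary $X\in\cS_k(K)$ with $X,E\setminus X\notin\cS$ there is no reason any such linkedness condition holds, so the splice as described breaks. The known proofs (Robertson--Seymour, and~\cite[Theorem~3.3]{GGRW}) handle exactly this point by extremal choices --- minimizing $\lambda$ over the relevant sets, or working with a closure (``loose tangle'') and proving a splicing lemma only for sets satisfying such a minimality condition. That choice-plus-splicing lemma is the entire combinatorial content of the theorem, and deferring it to ``bookkeeping'' following~\cite{GGRW} means the hard direction has not actually been proved.
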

To prove Lemma~\ref{branched}, consider a set $X\in \cT$. Take the partition $\cS$ of $E$ consisting of $E\setminus X$ and all singleton subsets of $X$. Since $K$ is $k$-entangled,
we cannot extend $\cS$ to a tangle of order $\lambda(X)+1$, so, by Lemma~\ref{duality}, there is a partial branch-decomposition of width at most $\lambda(X)$ that conforms to $\cS$.
Therefore $X$ is weakly $\lambda(X)$-displayed.

Lemma~\ref{branched} can be refined, but we need the following result. For disjoint sets $X$ and $Y$ in $K$ we define $\kappa_K(X,Y)$ to be the minimum of $\lambda(Z)$ taken over all
sets $Z$ with $X\subseteq Z\subseteq E(M)\setminus Y$.
\begin{lemma}\label{linked}
Let $(T, f)$ be a partial branch-decomposition in a connectivity system $K=(E,\lambda)$.
Now let $Y$ be the set displayed by a leaf $r$, let $X\subseteq E\setminus Y$, and let  $(T, f')$ be the partial branch decomposition obtained by defining $f'(e)=f(e)$ for all $e\in X$ and 
defining $f'(e)=r$ for all other elements $e\in E\setminus X$. If $\lambda(X) = \kappa_K(X,Y)$,
then the weight of each edge in $(T, f')$ is at most the weight of the same edge in $(T,f)$.
\end{lemma}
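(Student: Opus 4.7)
The plan is to apply submodularity of $\lambda$ to $X$ and the partition sets coming from $(T,f)$, using the hypothesis $\lambda(X)=\kappa_K(X,Y)$ to control the union term. Fix an edge $e_0$ of $T$ and let $(A_{e_0},B_{e_0})$ be the partition of $E$ that $(T,f)$ induces at $e_0$, with $r$ lying on the $B_{e_0}$-side so that $Y\subseteq B_{e_0}$.

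First I would identify the partition that $(T,f')$ induces at $e_0$. By construction, $f'$ agrees with $f$ on $X$ and sends every element of $E\setminus X$ to the leaf $r$, which lies in the component of $T-e_0$ corresponding to $B_{e_0}$. Hence the set of elements that $f'$ sends to the $A_{e_0}$-side is exactly $X\cap A_{e_0}$, so the width of $e_0$ in $(T,f')$ is $\lambda(X\cap A_{e_0})$. The lemma therefore reduces to proving $\lambda(X\cap A_{e_0})\le \lambda(A_{e_0})$ for every edge $e_0$.

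For this, submodularity of $\lambda$ gives
\[
\lambda(X\cap A_{e_0})+\lambda(X\cup A_{e_0})\le \lambda(X)+\lambda(A_{e_0}).
\]
Since $Y\subseteq B_{e_0}$ is disjoint from both $X$ and $A_{e_0}$, the set $X\cup A_{e_0}$ satisfies $X\subseteq X\cup A_{e_0}\subseteq E\setminus Y$, so by definition of $\kappa_K$ we obtain $\lambda(X\cup A_{e_0})\ge \kappa_K(X,Y)=\lambda(X)$. Substituting into the submodular inequality cancels the $\lambda(X)$ terms and yields $\lambda(X\cap A_{e_0})\le \lambda(A_{e_0})$, as required.

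There is no serious obstacle here; the whole argument is a one-edge-at-a-time application of submodularity combined with the defining property of $\kappa_K$. The only real bookkeeping is correctly identifying the partition induced by $(T,f')$ at each edge, and this is immediate from the construction of $f'$: the leaf $r$ acts as a ``sink'' that absorbs every element outside $X$, so the only elements that can cross $e_0$ in $(T,f')$ are precisely those elements of $X$ that already crossed $e_0$ in $(T,f)$.
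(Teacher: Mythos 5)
Your proof is correct and follows essentially the same route as the paper: identify the partition induced by $(T,f')$ at each edge as $X\cap A$, apply submodularity to $X$ and $A$, and use $\lambda(X)=\kappa_K(X,Y)$ to bound the union term. In fact you state the key inequality with the correct orientation, $\lambda(X\cup A)\ge\kappa_K(X,Y)=\lambda(X)$, whereas the paper's proof contains a sign typo ($\le$ instead of $\ge$) at that step; your version is the intended argument.
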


\begin{proof}
Let $e$ be an edge of $T$ and let $(A,B)$ be the partition of $E$ such that $B$ is the set of elements mapped by $f$ to the component of $T-e$ containing $r$.
The the weight of $e$ in $(T,f)$ is $\lambda(A)$ and the weight of $e$ in $(T,f')$ is $\lambda(A\cap X)$. Since $\lambda(X) = \kappa_K(X,Y)$, we have $\lambda(X\cup A)\le \lambda(X)$. Then
$$
\lambda(A\cap X) \le \lambda(A)+\lambda(X) -\lambda(X\cup A) 
\le \lambda(A)+\lambda(X) -\lambda(X\cup A) \le \lambda(A),
$$
as required.
\end{proof}

A set in $\cT$ is called {\em $\cT$-linked} if there is no set $Y\in \cT$ with $X\subseteq Y$ and $\lambda(Y)<\lambda(X)$. 
\begin{lemma}\label{branched2}
If $\cT$ is a tangle of order $k$ in a $k$-entangled connectivity system $K=(E,\lambda)$, then every $\cT$-linked set $X\in \cT$ is $\lambda(X)$-branched.
\end{lemma}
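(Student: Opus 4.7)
The plan is to take a weakly $\lambda(X)$-branched decomposition (which exists by Lemma~\ref{branched}) and to merge all the leaves displaying subsets of $E\setminus X$ into a single leaf, using Lemma~\ref{linked}. Specifically, by Lemma~\ref{branched} there is a partial branch-decomposition $(T,f)$ of $K$ of width at most $\lambda(X)$ in which each leaf displays either a subset of $E\setminus X$ or a singleton. By Lemma~\ref{easy1} there is a unique leaf $r$ of $T$ whose displayed set is not in $\cT$. Because every singleton $\{x\}\in \cS_k(K)$ lies in $\cT$ (its complement has size $|E|-1$, and so is excluded by the tangle axioms), the leaf $r$ must display a set $Y\subseteq E\setminus X$ with $E\setminus Y\in\cT$; in particular every element of $X$ sits at its own singleton leaf, distinct from $r$.

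The key step is then to show $\lambda(X)=\kappa_K(X,Y)$. The inequality $\kappa_K(X,Y)\le \lambda(X)$ is immediate from the set $X$ itself. For the other direction, suppose for a contradiction that some $Z$ with $X\subseteq Z\subseteq E\setminus Y$ satisfies $\lambda(Z)<\lambda(X)$. Then $\lambda(Z)<k$, so either $Z\in \cT$ or $E\setminus Z\in \cT$. In the first case $Z$ witnesses that $X$ is not $\cT$-linked, a contradiction. In the second case, the three sets $X$, $E\setminus Y$, and $E\setminus Z$ all lie in $\cT$; since $X\subseteq E\setminus Y$ and $Y\cap Z=\emptyset$ (because $Z\subseteq E\setminus Y$), their union is all of $E$, contradicting the tangle axiom forbidding three sets in $\cT$ from covering $E$.

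Having established this equation, Lemma~\ref{linked} applies: the modified pair $(T,f')$ obtained by leaving $f$ unchanged on $X$ and sending every element of $E\setminus X$ to $r$ is a partial branch-decomposition of width at most $\lambda(X)$. In $(T,f')$ the leaf $r$ displays exactly $E\setminus X$, while each other leaf of $T$ either still displays a singleton element of $X$ (if it was previously a singleton leaf for such an element) or displays the empty set. This is precisely the definition of $X$ being $\lambda(X)$-branched. I expect the main obstacle to be the verification that $\lambda(X)=\kappa_K(X,Y)$, as this is the step that activates both the $\cT$-linked hypothesis on $X$ and the delicate choice of $r$ as the unique non-tangle leaf, through the tangle axioms.
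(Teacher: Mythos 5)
Your proof is correct and follows essentially the same route as the paper's: take the weakly $\lambda(X)$-branched decomposition from Lemma~\ref{branched}, identify the unique leaf $r$ displaying a set outside $\cT$ via Lemma~\ref{easy1}, observe that this set is disjoint from $X$, and merge all of $E\setminus X$ into $r$ via Lemma~\ref{linked}. The only difference is that you spell out the verification that $\kappa_K(X,Y)=\lambda(X)$ (which the paper asserts directly from the $\cT$-linked hypothesis), and your argument for that step is valid.
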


\begin{proof} Let $t=\lambda(X)$.
By Lemma~\ref{branched}, the set $X$ is weakly $t$-branched, so there is a
is a partial branch decomposition $(T, f)$ of $K$ of width at most $t$ such that each leaf either displays a subset of $E\setminus X$ or it displays a singleton.
By Lemma~\ref{easy1}, there is a leaf, say $r$, that displays a set $Y$ that is not in $\cT$. Since $E\setminus Y\in \cT$, the set $Y$ is not a singleton and hence $Y$ is disjoint from $X$.
Since $X$ is $\cT$-linked, we have $\kappa_K(X,Y)=\lambda(X)$. But then, by Lemma~\ref{linked}, we can re-map all elements in $E\setminus X$ to $r$, and hence $X$ is $t$-branched.
\end{proof}

We conclude by interpreting the above results in the context of matroids for some small values of $k$.
Let $M$ be a $k$-entangled matroid and let $\cT$ be a tangle of order $k$ in $K(M)$.

For $k=2$, every set in $\cT$ is $1$-branched, and such sets consists of loops and co-loops of $M$. So $M$ has exactly one component with two or more elements.

Now consider $k=3$ and suppose that $M$ is connected. Every set in $\cT$ is $2$-branched and
any $2$-branched set admits a series-parallel reduction to a single element. Therefore $M$ admits a series-parallel reduction to a $3$-connected matroid with at least $4$-elements.

Finally consider $k=4$ and suppose that $M$ is $3$-connected. Then every set in $\cT$ is $3$-branched. Here the structure becomes a bit more interesting, but simple enough to be potentially useful.
For example, if the matroid is binary, then we can interpret a partial branch decomposition of width $3$ as being constructed via a sequence of $3$-sums.

\section{Acknowledgment}

I thank Geoff Whittle for suggesting the extension of Theorem~\ref{entangled} to Theorem~\ref{main}.

\end{document}